\renewcommand{\phi}{\varphi}
\newcommand{\QQ}{\mathbf{Q}}
\newcommand{\ZZ}{\mathbf{Z}}
\newcommand{\RR}{\mathbf{R}}
\newcommand{\CC}{\mathbf{C}}
\newcommand{\cO}{\mathcal{O}}
\newcommand{\mf}[1]{\mathfrak{#1}}
\newcommand{\mc}[1]{\mathcal{#1}}
\newcommand{\gl}{\mathrm{GL}}
\newcommand{\GO}{\mathrm{GO}}
\renewcommand{\varepsilon}{\epsilon}
\DeclareMathOperator{\disc}{disc}
\DeclareMathOperator{\Tr}{Tr}
\DeclareMathOperator{\sh}{sh}
\DeclareMathOperator{\unit}{unit}
\DeclareMathOperator{\Cl}{Cl}
\DeclareMathOperator{\Log}{Log}
\renewcommand{\H}{\mf{H}}
\newcommand{\Sr}{\mc{S}}
\newcommand{\vect}[1]{\begin{pmatrix}#1\end{pmatrix}}
\newtheorem{theorem}{Theorem}[section]
\newtheorem{proposition}[theorem]{Proposition}
\numberwithin{equation}{section}
\newcommand{\Rcom}[1]{}
\begin{document}

\title{Shapes of Unit Lattices in $D_p$-number fields}
\subjclass[2010]{11R27, 11R18, 11R33}
\keywords{Unit groups, dihedral fields, lattices, geodesics, hypercycles}

\author{Robert Harron}
\address{
Radix Trading\\
Chicago, IL \\
USA
}
\email{robert.harron@gmail.com}

\author{Erik Holmes}
\address{
Department of Mathematics\\
University of Toronto\\
Toronto, ON\\
Canada
}
\email{eholmes@math.toronto.edu}

\author{Sameera Vemulapalli}
\address{
Department of Mathematics\\
Harvard University\\
Cambridge, MA\\
USA
}
\email{vemulapalli@math.harvard.edu}

\date{\today}

\begin{abstract}
The unit group of the ring of integers of a number field, modulo torsion, is a lattice via the logarithmic Minkowski embedding. We examine the shape of this lattice, which we call the \emph{unit shape}, within the family of prime degree $p$ number fields whose Galois closure has dihedral Galois group $D_p$ and a unique real embedding. In the case $p = 5$, we prove that the unit shapes lie on a single hypercycle on the modular surface (in this case, the modular surface is the space of shapes of rank $2$ lattices). For general $p$, we show that the unit shapes are contained in a finite union of translates of periodic torus orbits in the space of shapes. 
\end{abstract}

\maketitle

\tableofcontents

\begin{figure}[h]\label{fig:D5_data}
	\includegraphics[scale=0.55]{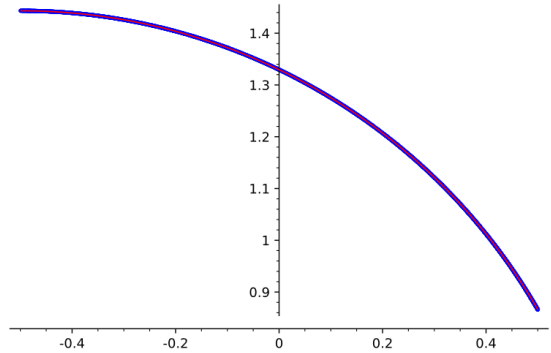}
	\caption{A $D_5$-extension with one real embedding has a rank $2$ unit lattice. For each of the 5422 such fields in the LMFDB (see \cite{LMFDB}), we computed a certain basis for the unit lattice and plotted the corresponding point in the upper half plane above. Observe a striking pattern: the resulting points lie on an arc of the circle $\left(x+\frac{1}{2}\right)^2+\left(y-\frac{1}{2\sqrt{3}}\right)^2=\left(\frac{2}{\sqrt{3}}\right)^2$.}
	\label{fig:D5_data}
\end{figure}

\section{Introduction}

The unit group of the ring of integers of a number field, modulo torsion, has the structure of a lattice via the logarithmic Minkowski embedding. This paper investigates the following general question: which lattices can arise this way? Let $p \geq 5$ be a prime. In this paper, we deal with the case of degree $p$ number fields with one real embedding whose Galois closure has dihedral Galois group $D_p$. Motivated by \Cref{fig:D5_data}, we ask:

\begin{center}
	\emph{Which lattices arise as unit lattices of $D_p$-extensions with one real embedding?}
\end{center}

(By standard abuse of notation in arithmetic statistics, a $D_p$-extension is a degree $p$ number field whose Galois closure has Galois group $D_p$). The main result of this paper, \Cref{maintheorem}, describes a finite union of translates of periodic torus orbits which contains the unit lattices of \emph{every} $D_p$-extension with one real embedding. When $p = 5$, Dirichlet's unit theorem shows that unit lattice has rank $2$, and the space of rank $2$ lattices up to scaling, rotation, and reflection is identified with the quotient $\gl_2(\ZZ)\backslash\mf{H}$ where $\mf{H}$ is the complex upper half plane. In this case, we prove that \emph{every} such unit lattice lie on the curve shown in \Cref{fig:D5_data}, and that this curve is a hypercycle on the upper half plane $\mf{H}$.

\subsection{Motivation}
\label{subsec:motivation}
We focus on the case of $D_p$-extensions for two reasons. First, when starting this project, we noticed that when $p = 5$, computational data from the LMFDB (the $L$-functions and modular forms database) seemed to imply that the unit lattices are constrained; see \Cref{fig:D5_data}. This is one of the simplest cases where the unit lattice seems to be constrained (unlike the case of totally real $S_3$-cubic extensions -- see \cite{DS unit shape}), so we seek to explain it. Second, in this case, we have the following fantastic result of Moser (see \cite[Proposition III.1]{Moser1979}) which explicitly describes the $\ZZ[D_p]$-module structure of the unit lattice of the Galois closure of our degree $p$ extension. It is this explicit description that is the key input in this paper. For a general group $G$ and a Galois $G$-extension, it is extremely difficult to determine the $\ZZ[G]$-module structure of the unit lattice.

\subsection{Setup}
Let $K$ be a degree $n$ number field. Let $\sigma_1,\dots,\sigma_r$ be the real embeddings of $K$ and $\tau_1, \overline{\tau_1},\dots, \tau_s, \overline{\tau_s}$ be the pairs of complex conjugate embeddings of $K$, where here $r + 2s = n$. Let $E_K$ be the group of units of $\cO_K$ modulo torsion. The \emph{logarithmic embedding}
	\begin{align*}
		\Log\colon E_K		&  	\longrightarrow \RR^{r + s}														\\
		 	u		&	\mapsto (\log|\sigma_1(u)|, \hdots, \log|\sigma_r(u)|, 2\log | \tau_1(u) |,\hdots,2\log | \tau_s(u) |)
	\end{align*}
 presents $E_K$ as a lattice of rank $r+s-1$. We say $E_K$ is the \emph{unit lattice} of $K$. See \cite[Chapter~1.7]{neukirch} for details.

In this paper, we study these lattices up to scaling, rotation, and reflection. The shape of a rank $n$ lattice $\Lambda$ is denoted $\sh(\Lambda)$ and is defined to be its equivalence class under scaling, rotation, and reflection. It is typically written as an element of the following double coset space:
	\[	\mathcal{S}_n \coloneqq \gl_n(\ZZ)\backslash \gl_n(\RR) / \GO_n(\RR)		\]
Under this equivalence, the rows of the matrix in $\gl_n(\RR)$ are the vectors which form a basis of the lattice. We call the shape of the unit lattice the \emph{unit shape}. (See \cite[Chapter~2]{milne} for details).

\subsection{Statement of results}\label{results}
For simplicity, we begin by stating the results for $p=5$. 

\subsubsection{The quintic case}
Recall that $\mathcal{S}_2$ can be identified with the quotient $\gl_2(\ZZ)\backslash \mf{H}$, where here $\mf{H}$ is the upper half plane. Let $x,y$ be coordinates for $\mf{H}$.

\begin{theorem}\label{hypercycle}
	Let $K$ be a $D_5$-extension with a unique real embedding. Then, the unit shape of $K$ lies on the arc of the circle
\begin{align*}
	\left(x+\frac{1}{2}\right)^2+\left(y-\frac{1}{2\sqrt{3}}\right)^2=\left(\frac{2}{\sqrt{3}}\right)^2
\end{align*}
from $\frac{1+i\sqrt{3}}{2}$ to $-\frac{1}{2}+i\frac{5}{2\sqrt{3}}$ in $\gl_2(\ZZ)\backslash \mf{H}$.
\end{theorem}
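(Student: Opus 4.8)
The plan is to use the action of $G \coloneqq \Gal(L/\QQ) \cong D_5$, where $L$ is the Galois closure of $K$, to present $E_K$ as a module over a small ring of integers, and then to compute its Gram matrix explicitly. Write $G = \langle r, s \mid r^5 = s^2 = 1,\ srs = r^{-1}\rangle$ with $K = L^{\langle s\rangle}$, and let $F = L^{\langle r\rangle}$ be the quadratic subfield. Since $K$ is not totally real, $L$ is totally complex (complex conjugation is a nontrivial element of $G$ that, fixing the real embedding of $K$, is conjugate to $s$), and then $F$ is imaginary quadratic — a real place of $F$ would lie below a complex place of $L$, impossible since $[L:F]=5$ is odd. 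Thus $\cO_F^\times$ is finite, so the norm element $\sum_{i=0}^{4} r^i \in \ZZ[G]$ annihilates $E_L$, hence $E_K$. Put $\theta \coloneqq r + r^{-1}$; expanding $(r + r^{-1})^2 = r^2 + r^{-2} + 2$ and using $\sum_i r^i = 0$ on $E_K$ gives the relation $\theta^2 + \theta - 1 = 0$ there. As $\theta$ preserves $E_K$, which has rank $2$, and $t^2 + t - 1$ is irreducible over $\QQ$, the ring $\ZZ[\theta] \cong \ZZ[\tfrac{1+\sqrt5}{2}] = \cO_{\QQ(\sqrt5)}$ acts on $E_K$ with $E_K \otimes \QQ$ a one-dimensional $\QQ(\sqrt5)$-vector space; since $\cO_{\QQ(\sqrt5)}$ is a PID, $E_K$ is free of rank one over it, so $E_K = \ZZ\,\Log\varepsilon \oplus \ZZ\,\Log(\theta\varepsilon)$ for some unit $\varepsilon$. (Moser's description of $E_L$ as a $\ZZ[G]$-module can be inserted at this point; it is what is needed to run the analogous argument for general $p$.)

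Next I would carry out the key computation. Fix an embedding $\iota \colon L \hookrightarrow \CC$ restricting to the real embedding $\sigma$ of $K$, and set $\tau_j \coloneqq \iota \circ r^j|_K$; then $\tau_1, \tau_2$ represent the two complex places of $K$. Tracking how $r$ permutes the five embeddings of $K$, one finds that for $u \in E_K$ with $\Log u = (a_0, a_1, a_2)$ (so $a_0 = \log|\sigma u|$, $a_j = 2\log|\tau_j u|$, $a_0+a_1+a_2 = 0$) one has
\[
\Log(\theta u) = (a_1,\ a_0 - a_1,\ -a_0),
\]
for instance $\sigma(\theta u) = \iota(ru)\,\iota(r^{-1}u) = \tau_1(u)\,\overline{\tau_1(u)} = |\tau_1(u)|^2$, with the other two coordinates computed similarly and forced consistent by the product formula. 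Writing $\Log\varepsilon = (p,\, q,\, -p-q)$, the Gram matrix of $\{\Log\varepsilon, \Log(\theta\varepsilon)\}$ for the standard inner product on $\RR^3$ is
\[
\begin{pmatrix} 2p^2 + 2pq + 2q^2 & p^2 + 3pq - q^2 \\ p^2 + 3pq - q^2 & 2p^2 - 2pq + 2q^2 \end{pmatrix},
\]
with determinant $3(p^2 - pq - q^2)^2$. (The Galois action is orthogonal for the inner product on $E_L$ but not on $E_K$, so $\theta$ is not self-adjoint here; this is exactly why the answer is a hypercycle rather than a geodesic.) Passing to $\mf{H}$ and — after possibly replacing $\varepsilon$ by $\theta\varepsilon$ — setting $\xi = (p + 2q)/p \in (-\sqrt5, \sqrt5)$ (nondegeneracy forces $p^2 - pq - q^2 \neq 0$), the unit shape becomes
\[
\left(-\tfrac12 + \frac{4\xi}{\xi^2 + 3}\right) + i\,\frac{\sqrt3\,(5 - \xi^2)}{2(\xi^2 + 3)},
\]
and a direct substitution shows this point satisfies $\left(x + \tfrac12\right)^2 + \left(y - \tfrac1{2\sqrt3}\right)^2 = \tfrac43$.

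It then remains to cut the admissible locus down to the stated arc inside $\gl_2(\ZZ)\backslash\mf{H}$. The shape does not depend on the generator $\varepsilon$: replacing $\varepsilon$ by $\theta^2\varepsilon$ (again a generator, since $\theta$ is a unit of $\cO_{\QQ(\sqrt5)}$) replaces $\{\Log\varepsilon, \Log(\theta\varepsilon)\}$ by $\{\Log(\theta^2\varepsilon), \Log(\theta^3\varepsilon)\}$ and, by the displayed formula, replaces $\xi$ by $f^2(\xi)$ where $f(\xi) = \tfrac{5 - \xi}{\xi - 1}$; on $(-\sqrt5,\sqrt5)$ this $f^2$ is a fixed-point-free hyperbolic transformation realized by an element of $\SL_2(\ZZ)$. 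Since shapes are taken up to reflection, the map $z \mapsto -1 - \bar z$ in $\gl_2(\ZZ)$, which acts on the circle as $\xi \mapsto -\xi$, gives one further identification. The group generated by these two maps is infinite dihedral, with fundamental domain $\{0 \le \xi \le 1\}$ (the two boundary reflections being $\xi \mapsto -\xi$, fixing $\xi=0$, and $\xi \mapsto -f^2(\xi)$, fixing $\xi=1$); hence every unit shape is equivalent to the point above with $\xi \in [0,1]$. As $\xi$ runs over $[0,1]$ this point sweeps out precisely the arc from $-\tfrac12 + i\tfrac5{2\sqrt3}$ (at $\xi = 0$) to $\tfrac{1 + i\sqrt3}{2}$ (at $\xi = 1$), which is the assertion.

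The main obstacle is the embedding bookkeeping behind the formula for $\Log(\theta u)$: obtaining the precise coefficients $(a_1,\ a_0 - a_1,\ -a_0)$ — and hence the precise circle appearing in the statement — requires correctly matching each embedding $\iota \circ r^j|_K$ to an archimedean place of $K$, i.e., tracking left multiplication by the class of complex conjugation on $G/\langle s\rangle$. Once this is in hand, the determinant factorization and the fundamental-domain analysis of the last step are routine computations.
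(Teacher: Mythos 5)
Your proposal is correct, and its computational core coincides with the paper's proof: you obtain a basis $\{u_0,\theta u_0\}$ of $E_K$ from the rank-one $\cO_{\QQ(\sqrt5)}$-module structure, derive the same formula $\Log(\theta u)=(a_1,\,a_0-a_1,\,-a_0)$, and your Gram matrix agrees with the paper's (which uses the $\gl_2(\ZZ)$-equivalent basis $\{-\Log u_1,\Log u_0\}$), leading to the same circle. There are two genuine differences worth noting. First, where the paper invokes Moser's theorem ($E_L\cong\mf{a}\mf{P}^e$ as a $\ZZ[D_5]$-module, hence $E_K\cong\mf{a}\mf{p}^e$) together with $h(\QQ(\sqrt5))=1$, you rederive the needed input from scratch: the quadratic subfield of $L$ is imaginary, so the norm element $\sum_i r^i$ kills $E_L$, forcing $\theta^2+\theta-1=0$ on $E_K$ and hence freeness of $E_K$ over the PID $\ZZ[\theta]=\cO_{\QQ(\sqrt5)}$. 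This is a clean, self-contained alternative for $p=5$, though (as you note) it does not generalize to larger $p$, where Moser's finer statement is indispensable and where one must also track the ideal class of $\mf{a}\mf{p}^e$. Second, you carry out the reduction to the specific arc in detail — normalizing $\xi\in(-\sqrt5,\sqrt5)$, identifying the residual $\gl_2(\ZZ)$-identifications $\xi\mapsto f^2(\xi)$ and $\xi\mapsto-\xi$, and exhibiting $[0,1]$ as a fundamental domain for the resulting infinite dihedral group — whereas the paper compresses this into the phrase ``a straightforward computation shows $(x,y)$ lies on the hypercycle $\psi$.'' Your version makes explicit why the locus is the bounded arc with the stated endpoints rather than the full circle; I verified the key identities (the determinant $3(p^2-pq-q^2)^2$, the parametrization in $\xi$, the circle equation, and the fixed points $0$ and $1$ of the two boundary reflections), and they all check out.
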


We now sketch a proof of \Cref{hypercycle}, which has four key steps. 
\begin{enumerate}
    \item For every such $D_5$-extension $K$ of $\QQ$ with Galois closure $L$, the action of $D_5$ on $E_L$ presents $E_L$ as a $\ZZ[D_5]$-module. This action restricts to an action of a smaller ring $\ZZ[\sigma + \sigma^{-1}]$ on $E_K$. (Here, $\sigma$ is a $5$-cycle in $D_5$).
    \item Via the logarithmic embedding, the action of $\ZZ[\sigma + \sigma^{-1}]$ on $E_K$ gives rise to an action of $\RR[\sigma + \sigma^{-1}]$ on $E_K \otimes \RR \simeq \RR^3$. 
    
    \item  Due to work of Moser (\cite{Moser1979}), $\ZZ[\sigma + \sigma^{-1}] \simeq \ZZ[\zeta_5 + \zeta_5^{-1}]$ and as a module over this ring, $E_K$ is isomorphic to a fractional ideal of $\QQ^+ \coloneqq \QQ(\zeta_5 + \zeta_5^{-1})$. Therefore, $E_K \otimes \RR$ and $\QQ^+ \otimes \RR$ are isomorphic as $\RR[\sigma + \sigma^{-1}] \simeq \RR[\zeta_5 + \zeta_5^{-1}]$-modules, but the quadratic forms on the two spaces are slightly different. 
    \item Now, for every number field $M$, the lattice shapes of all discrete and nonzero $\cO_M$-submodules of $M \otimes \RR$ in $M \otimes \RR$ forms a finite union of torus orbits in the moduli space of lattices. Therefore, the lattice shape of $E_K$ lies in a translate of this finite union of torus orbits. 
\end{enumerate}

Next, we say a few words about the arc mentioned in \Cref{hypercycle}, which has an interesting interpretation in the hyperbolic plane. Endow the upper half plane $\mf{H}$ with its usual hyperbolic geometry. A geodesic in this geometry is given by either an arc of a (Euclidean) half-circle whose center lies on the real axis or a segment of a vertical ray. It turns our that a connected component of the set of points equidistant from a geodesic is itself either an arc of a circle or a line segment, respectively. Such curves are (sometimes) called \textit{hypercycles}. Given a geodesic $\gamma$ whose diameter is the real interval $[A,B]$, the hypercycles of $\gamma$ are the arcs in $\mf{H}$ lying on circles whose intersection with $\RR$ is $\{A,B\}$. In the case that $\gamma$ is a segment of the vertical line, $x=A$, the hypercycles of $\gamma$ are segments of lines with non-zero slope that intersect the real axis at $x=A$.

In our case, let $\gamma$ be the geodesic arc in $\H$ connecting the points $\frac{1+i\sqrt{5}}{3}$ and $\frac{-1+i\sqrt{5}}{2}$. This is an arc of the circle centered at $-1/2$ of radius $\sqrt{5}/2$. Let $\psi$ denote the hypercycle a distance $\frac{1}{2}\log(5/3)$ from $\gamma$, given by the equation:
\begin{align*}
	\left(x+\frac{1}{2}\right)^2+\left(y-\frac{1}{2\sqrt{3}}\right)^2=\left(\frac{2}{\sqrt{3}}\right)^2, 
\end{align*}
with endpoints $\frac{1+i\sqrt{3}}{2}$ and $-\frac{1}{2} + i \frac{5}{2\sqrt{3}}$. 
This is the arc mentioned in the statement of \Cref{hypercycle}. Our experimental data suggests that the unit shapes are dense on this arc, but we do not know how to prove this. 

Why do the shapes of unit lattices lie on a hypercycle? We prove in \Cref{maintheorem} that for degree $p$ extensions with Galois group $D_p$ and $1$ real embedding, the shapes of unit lattices lie on a finite union of translates of periodic torus orbits. When $p = 5$, the shapes lie on a single translate of a periodic torus orbit. Moreover, this particular translate of this periodic torus orbit happens to be a hypercycle.

Moser's explicit description of the $\ZZ[D_5]$-module structure of $E_L$ involves the quadratic field $\QQ^+ = \QQ(\zeta_5 + \zeta_5^{-1})$, and thus we observe an interesting relationship between the regulator of $K$ and the norm form of $\QQ^+$. Let $A^+ = \ZZ[\zeta_5 + \zeta_5^{-1}]$ be the maximal order of $\QQ^+$, and let $N(x_0, x_1)$ be the norm form of $A^{+}$ with respect to the basis $\{1, \zeta_5+\zeta_5^{-1}\}$. A straightforward calculation shows that $N(x_0, x_1)= x_0^2 - x_0x_1 - x_1^2$, and that this indefinite quadratic form corresponds to the geodesic $\gamma$ in $\gl_2(\ZZ)\backslash \mf{H}$. Let $\sigma \in D_5$ be a $5$-cycle.

\begin{theorem}\label{5regulator}
Let $K$ be a $D_5$-extension with a unique real embedding $\iota$. Then:
\begin{enumerate}
    \item there exists a unit $u_0 \in E_K$ such that $\{u_0, u_1 = \sigma(u_0) + \sigma^{-1}(u_0)\}$ forms a basis for $E_K$; 
    \item and for any such unit $u_0$, the regulator of $K$ is equal to
	\[
        \lvert N(\log \lvert \iota(u_0) \rvert, \log \lvert \iota(u_1) \rvert) \rvert.	
    \]
\end{enumerate}
\end{theorem}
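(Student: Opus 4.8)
The plan is to use Moser's description of $E_K$ as a fractional ideal of $\QQ^+$ to get both claims simultaneously. By step (3) of the sketch of \Cref{hypercycle}, the ring $A^+ = \ZZ[\sigma+\sigma^{-1}]$ acts on $E_K$, and Moser's result identifies this ring with $\ZZ[\zeta_5+\zeta_5^{-1}]$ and presents $E_K$ as (isomorphic to) a fractional ideal $\mf{a}$ of $\QQ^+$. Since $A^+$ is a PID (the real quadratic field $\QQ(\sqrt 5)$ has class number $1$), every fractional ideal is principal, so $E_K \simeq A^+$ as an $A^+$-module. In particular $E_K$ is free of rank $1$ over $A^+$, generated by some $u_0$; then $\{1, \zeta_5+\zeta_5^{-1}\}$ being a $\ZZ$-basis of $A^+$ translates into $\{u_0, (\sigma+\sigma^{-1})u_0\} = \{u_0, \sigma(u_0)+\sigma^{-1}(u_0)\}$ being a $\ZZ$-basis of $E_K$. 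This proves (1). For (2), I would first reduce to the case of the particular basis from (1): if $u_0'$ is any other unit with $\{u_0', u_1'\}$ a $\ZZ$-basis, the change of basis matrix lies in $\gl_2(\ZZ)$, and since $N$ is a binary quadratic form whose associated lattice-with-form is $A^+$ (where $\gl_2(\ZZ)$ acts by $A^+$-module automorphisms composed with $\ZZ$-basis changes), $|N(\Log \text{-data})|$ is independent of the choice — more carefully, $u_0' = \alpha u_0$ for a unit $\alpha \in (A^+)^\times$ (since $E_K$ is free rank $1$ and both are generators), and I would check directly that multiplication by a unit of $A^+$ preserves $|N(\cdot)|$ up to the sign/absolute value, which follows from $|N(\alpha)| = 1$ and multiplicativity of the norm form under the $A^+$-module structure.

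The core computation is relating $|N(\log|\iota(u_0)|, \log|\iota(u_1)|)|$ to the regulator $R_K = |\det(\Log u_0 \mid \Log u_1)|$, where here $\Log$ is the (one-dimensional, since there is a unique real embedding and $(p-1)/2 = 2$ pairs of complex embeddings) logarithmic embedding into $\RR^{r+s} = \RR^{1+2}=\RR^3$ restricted to the rank-$2$ unit lattice. The key point is that the two real embeddings of $\QQ^+$ — equivalently, the two nontrivial eigenvalues $\zeta_5+\zeta_5^{-1}$ and $\zeta_5^2+\zeta_5^{-2}$ of the $A^+$-action — diagonalize the $\RR[\sigma+\sigma^{-1}]$-action on $E_K\otimes\RR \simeq \RR^3$. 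Writing the $\Log$ of $u_0$ in the eigenbasis, the coordinate along the real embedding $\iota$ is a $\sigma$-invariant direction and the other two coordinates transform by the two real places of $\QQ^+$; the regulator determinant then factors, through a Vandermonde-type identity in $\zeta_5+\zeta_5^{-1}$ and $\zeta_5^2+\zeta_5^{-2}$, as a constant multiple of the product of the two conjugates of $\log|\iota(u_0)| + (\zeta_5+\zeta_5^{-1})(\text{something})$, i.e. as a constant multiple of the norm $N(\log|\iota(u_0)|, \log|\iota(u_1)|)$. I would pin down this constant by a direct $2\times 2$ (or $3\times 3$) determinant evaluation, using that $u_1 = (\sigma+\sigma^{-1})u_0$ so that $\log|\iota(u_1)|$ picks up the eigenvalue factors, and using the standard fact that the sum of all $\Log$-coordinates of a unit is zero (product formula) to eliminate the $\iota$-coordinate redundancy.

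The main obstacle is step two: carefully tracking how the quadratic form changes between $E_K\otimes\RR$ (equipped with the form coming from the logarithmic embedding of $K$) and $\QQ^+\otimes\RR$ (equipped with the trace/norm form), which the excerpt itself flags as "slightly different" in step (3) of the sketch. Concretely, the logarithmic embedding of $K$ uses the single real place with weight $1$ and pairs of complex places with weight $2$, whereas the Minkowski geometry on $\QQ^+$ uses its two real places each with weight $1$; I must identify the precise linear map intertwining the two and show that the discrepancy is exactly absorbed into the shape of $N$ — i.e. that $N$ as written, $x_0^2 - x_0x_1 - x_1^2$, is the correct form after this rescaling, not merely proportional to it. I expect this to come down to a clean but slightly delicate bookkeeping argument: express $\Log|\iota(u_1)|$ in terms of $\Log|\iota(u_0)|$ and the Galois conjugates, use the product formula to reduce three coordinates to two, and match against the discriminant of $N$ (which is $5$, the discriminant of $\QQ^+$) to fix all constants. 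Everything else — freeness over $A^+$, the PID argument, independence of the choice of $u_0$ — is routine once Moser's structural result is invoked.
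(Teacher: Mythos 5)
Your part (1) is correct and is essentially the paper's argument: trivial class number forces $E_K \cong A^+$ as a module over $\ZZ[\sigma+\sigma^{-1}] \cong \ZZ[\zeta_5+\zeta_5^{-1}]$, and the image of the $\ZZ$-basis $\{1,\zeta_5+\zeta_5^{-1}\}$ under any generator gives the claimed basis. (Your reduction of part (2) to a single generator via units of $A^+$ is unnecessary: any $u_0$ with the basis property is automatically an $A^+$-generator, and the computation below works uniformly for all of them.)

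For part (2), however, your proposal stops short of a proof at exactly the point where the content lies. You correctly guess that $N$ should factor as a product over the two real places of $\QQ^+$, but you leave the ``something'' in $\log|\iota(u_0)| + (\zeta_5+\zeta_5^{-1})(\text{something})$ unidentified, and you flag the weight-$1$ versus weight-$2$ discrepancy between the real and complex places as an unresolved obstacle. The single observation that closes the gap --- and on which the paper's proof is built --- is that
\[
\iota(u_1) \;=\; \iota(\sigma(u_0))\,\iota(\sigma^{-1}(u_0)) \;=\; \iota_1(u_0)\,\overline{\iota_1(u_0)} \;=\; |\iota_1(u_0)|^2,
\]
so $\log|\iota(u_1)| = 2\log|\iota_1(u_0)|$ is precisely the \emph{second} logarithmic coordinate of $u_0$; the factor of $2$ attached to the complex place is exactly what absorbs the discrepancy you were worried about. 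Hence if $\Log(u_0)=(a_0,a_1,-a_0-a_1)$, the two arguments of $N$ are simply $a_0$ and $a_1$. Equivariance of $\Log$ gives $\Log(u_1)=(a_1,\,a_0-a_1,\,-a_0)$, the regulator is the absolute value of the determinant of a $2\times 2$ minor of the resulting $2\times 3$ matrix, namely $|a_0(a_0-a_1)-a_1^2| = |a_0^2-a_0a_1-a_1^2| = |N(a_0,a_1)|$, and there is no constant left to pin down. Your diagonalization/Vandermonde route can be made to work, but expressing the eigen-coordinates of $\Log(u_0)$ in terms of $a_0,a_1$ is the same bookkeeping you deferred; as written, the core identity is asserted rather than proved.
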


\noindent Part $(1)$ of \Cref{5regulator} follows from Moser's work, so our contribution is part $(2)$.

\subsubsection{Generalizing to all primes $p$}

We first introduce some notation needed to state our generalization to all $p$. Let $p \geq 5$ be a prime and $r = (p-1)/2$. In this case the unit lattice of a $D_p$-extension has rank $r$. Let $\mathcal{G}_r$ be the space of real symmetric positive definite $r\times r$ matrices up to scaling and let matrices $M \in \gl_r(\RR)$ act on $\mathcal{G}_r$ by conjugation, i.e. $MGM^T$ for $G \in \mathcal{G}_r$. Note that the space of shapes of rank $r$ lattices, $\Sr_r$, can be identified with $\gl_r(\ZZ)\backslash \mathcal{G}_r$.

In the case of general $p$, our unit shapes will be contained in finitely many translates of an orbit of a certain subgroup of $\gl_r(\RR)$ acting upon a certain element $G_{\unit} \in \mathcal{G}_r$. We now define $G_{\unit}$.  The cyclotomic field $\QQ(\zeta_p)$ contains the totally real subfield $\QQ^+ = \QQ(\zeta_p+\zeta_p^{-1})$, which has ring of integers $A^+ = \ZZ[\zeta_p + \zeta_p^{-1}]$. Since $\QQ^+$ is totally real, the ring of integers is a lattice with respect to its trace pairing. Let $G$ be the Gram matrix of the trace form of $A^+$ with respect to the $\ZZ$-basis $B= [1, \zeta_p + \zeta_p^{-1}, \hdots, \zeta_p^{r-1} + \zeta_p^{1-r}]$. Let $\widetilde{G}$ be the $r\times (r+1)$ matrix obtained by adjoining a \emph{zeroing column} to $G$: the column with $j$--th entry given by $-\sum_{i=1}^{r} G_{ji}$. Then define 
	\[	G_{\unit} \coloneqq \widetilde{G}\widetilde{G}^T.	\]
The matrix $G_{\unit}$ is the Gram matrix of the lattice spanned by the rows of $\widetilde{G}$. 

We now describe the subgroup of $\gl_r(\RR)$ that will act on $G_{\unit}$. Let $\mathcal{T}$ be the diagonal torus of $\gl_r(\RR)$. Let $j_1,\dots,j_{r}$ be the $r$ embeddings of $\QQ^+$ into the real numbers, ordered so that $j_i(\zeta_p + \zeta_p^{-1}) = \zeta_p^i + \zeta_p^{-i}$, and let $P$ be the matrix whose rows are given by the embeddings of $B$, so
\[
    P = 
\begin{bmatrix}
    1       & \dots & 1 \\
    j_1(\zeta_p + \zeta_p^{-1})       & \dots & j_r(\zeta_p + \zeta_p^{-1}) \\
    \vdots &  \ddots & \vdots \\
    j_1(\zeta_p^{r-1} + \zeta_p^{1-r})       & \dots & j_r(\zeta_p^{r-1} + \zeta_p^{1-r})
\end{bmatrix}.
\]
Let $\mathcal{T}_p = P\mathcal{T}P^{-1}$. 

We now describe the finitely many translates of the orbit of $\mathcal{T}_p$ on $G_{\unit}$. For every class $[\mathfrak{a}] \in \Cl(\QQ^+)$, pick a representative ideal $\mathfrak{a}$ and a $\ZZ$-basis $B_{\mathfrak{a}} = [v_1,\dots,v_{r}]$.  Let $M_{\mathfrak{a}}$ be the matrix sending $B$ to $B_{\mathfrak{a}}$, so $M_{\mathfrak{a}}B^T = B_{\mathfrak{a}}^T$. For every $[\mathfrak{a}] \in \Cl(\QQ^+)$, consider the image of $G_{\unit}$ under the action of $M_{\mathfrak{a}}\mathcal{T}_p$; this is a set in $\mathcal{G}_r$, and we call it $\mathcal{O}_{[\mathfrak{a}]}$. Because $\mathcal{S}_r = \gl_r(\ZZ) \backslash \mathcal{G}_r$, we obtain $[\mathcal{O}_{[\mathfrak{a}]}] \subseteq \mathcal{S}_r$ via the quotient. Then, the following generalization of \Cref{hypercycle} describes the space of unit shapes:

\begin{theorem}\label{maintheorem}
    Let $K$ be a $D_p$-extension with a unique real embedding and Galois group $D_p$. Then:
    \begin{enumerate}
        \item For each $[\mathfrak{a}] \in \Cl(\QQ^+)$, the torus orbit $\mathcal{O}_{[\mathfrak{a}]}$ is a translate of a periodic torus orbit in $\mathcal{G}_r$ and the quotient $[\mathcal{O}_{[\mathfrak{a}]}] \subseteq \mathcal{S}_r$ is independent of the choice of representative ideal $\mathfrak{a}$ and basis $B_{\mathfrak{a}}$;
        \item and the unit shape of $K$ is contained in $[\mathcal{O}_{[\mathfrak{a}]}]$ for some $[\mathfrak{a}] \in \Cl(\QQ^+)$.
    \end{enumerate} 
\end{theorem}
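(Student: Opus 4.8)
The plan is to follow the four-step sketch outlined for \Cref{hypercycle}, but carried out over a general prime $p$ and keeping careful track of the class group. First I would recall Moser's description (\cite{Moser1979}): for a $D_p$-extension $K$ with Galois closure $L$, the group $\Gal(L/\QQ) = D_p = \langle \sigma, \tau \mid \sigma^p = \tau^2 = 1, \tau\sigma\tau = \sigma^{-1}\rangle$ acts on $E_L$, and the submodule fixed by $\tau$ (up to finite index, which does not affect the shape) is $E_K$, on which the subring $\ZZ[\sigma + \sigma^{-1}] \subseteq \ZZ[D_p]$ acts. Moser's proposition identifies $\ZZ[\sigma+\sigma^{-1}] \cong \ZZ[\zeta_p+\zeta_p^{-1}] = A^+$, and shows $E_K$, as an $A^+$-module, is isomorphic to a fractional ideal $\mathfrak{a}$ of $\QQ^+$; since the unique-real-embedding hypothesis forces the rank to be exactly $r = (p-1)/2$ (Dirichlet), $\mathfrak{a}$ is a full-rank $A^+$-submodule of $\QQ^+$, and its ideal class $[\mathfrak{a}] \in \Cl(\QQ^+)$ is the datum that will select which orbit $[\mathcal{O}_{[\mathfrak{a}]}]$ the unit shape falls into.

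Next I would make the comparison of quadratic forms precise. The logarithmic embedding sends $E_K$ into $\RR^{r+1}$ (one real place, $r$ complex places of $K$ lying over $L$), but the image lies in the trace-zero hyperplane $\{\sum x_i = 0\}$; equivalently, the unit lattice is determined by its $r$ ``complex'' coordinates together with the relation that the real coordinate is $-\sum$ of the others. On the other side, $\QQ^+$ has $r$ real embeddings $j_1,\dots,j_r$, and the Minkowski/trace pairing on $\QQ^+\otimes\RR \cong \RR^r$ is diagonalized by the matrix $P$. The key point is that the $\RR[\sigma+\sigma^{-1}] \cong \RR[\zeta_p+\zeta_p^{-1}] \cong \RR^r$-module isomorphism $E_K\otimes\RR \cong \QQ^+\otimes\RR$ intertwines the Galois/torus action but transports the unit-lattice quadratic form to a form on $\RR^r$ that is \emph{not} the trace form: the extra real place of $K$ contributes an extra coordinate, which under the identification becomes the linear functional $v \mapsto -\sum_i (\text{coords of } v)$, i.e. exactly the ``zeroing column'' appended to $G$. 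A Galois-equivariance argument (the real place of $K$ is the fixed point of $\tau$, and $1 + \sigma + \cdots + \sigma^{p-1}$ kills the trace-zero part) pins down that the quadratic form on $E_K\otimes\RR$, written in the basis $B$ transported via Moser's isomorphism, is exactly $\widetilde G \widetilde G^T = G_{\unit}$, up to the diagonal-torus ambiguity coming from the fact that Moser's isomorphism is only canonical up to scaling at each archimedean place of $\QQ^+$ — which is precisely the ambiguity absorbed by $\mathcal{T}_p = P\mathcal{T}P^{-1}$.

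Putting this together: choosing a $\ZZ$-basis of $E_K$ amounts to choosing a $\ZZ$-basis of the fractional ideal $\mathfrak{a}$, which is $M_{\mathfrak{a}}$ times a basis of $A^+$ followed by the diagonal-torus rescaling at the archimedean places; so the Gram matrix of $\Log(E_K)$ lies in $M_{\mathfrak{a}}\,\mathcal{T}_p$ acting on $G_{\unit}$, i.e. in $\mathcal{O}_{[\mathfrak{a}]}$, giving part (2). For part (1), $\mathcal{T}_p = P\mathcal{T}P^{-1}$ is the real points of the algebraic torus $\mathrm{Res}_{\QQ^+/\QQ}\mathbb{G}_m$ in its action on $\QQ^+\otimes\RR$, and the $A^+$-module structure forces the orbit to be the image of a compact torus times the unit-norm torus — periodicity of the orbit is then the statement that the stabilizer of $G_{\unit}$ in $\mathcal{T}_p$ contains a lattice of full rank, which is exactly the image of the unit group $(A^+)^\times$ under the log embedding (Dirichlet again, now for $\QQ^+$, whose unit rank is $r-1$), together with the norm-one direction that scales the shape trivially; independence of the representative $\mathfrak{a}$ and basis $B_{\mathfrak{a}}$ follows because two choices differ by an element of $\gl_r(\ZZ)$ on the left (change of $\ZZ$-basis) and a principal ideal times a unit on the torus side, both of which act trivially after passing to $\Sr_r = \gl_r(\ZZ)\backslash\mathcal{G}_r$.

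The main obstacle I anticipate is Step 2 — getting the quadratic form exactly right, including the constant. Moser's isomorphism $E_K \otimes \RR \cong \QQ^+\otimes\RR$ is an isomorphism of $\RR[\zeta_p+\zeta_p^{-1}]$-modules, but identifying precisely \emph{which} positive-definite form on the target it pulls the regulator pairing back to (and verifying it is $\widetilde G\widetilde G^T$ and not some other torus translate of it) requires carefully matching archimedean places of $L$ above the real place of $\QQ$ with the real embeddings $j_i$ of $\QQ^+$, and tracking how the absolute values $2\log|\tau_i(u)|$ at the complex places of $K$ relate to $\log|j_i(\cdot)|$ on $\QQ^+$ — there is a factor-of-$2$ and an orientation bookkeeping that must be done honestly. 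Everything else (the class group appearing, periodicity, well-definedness of $[\mathcal{O}_{[\mathfrak{a}]}]$) is formal once that identification is nailed down; and once it is, specializing $p = 5$ and computing that the single torus orbit is the stated circular arc recovers \Cref{hypercycle}.
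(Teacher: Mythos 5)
Your proposal follows essentially the same route as the paper: Moser's identification of $E_K$ with a fractional ideal of $A^+$, transport of the logarithmic regulator form to $\QQ^+\otimes\RR$ via an $S_\RR$-equivariant isomorphism whose ambiguity is absorbed into the torus $\mathcal{T}_p$, and the standard compactness of unit-group orbits plus change-of-basis/principal-ideal invariance for part (1). The one step you explicitly defer --- verifying that the map sending the basis $B$ to the rows of $\widetilde{G}$ is $S_\RR$-equivariant, so that $G_{\unit}=\widetilde{G}\widetilde{G}^T$ is a legitimate base point of the orbit --- is precisely the paper's \Cref{p:equivariant}, a short three-case trace computation, so nothing in your outline would fail.
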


We note that the proof of \Cref{maintheorem} proceeds exactly as in the proof of \Cref{hypercycle}, with $5$ replaced with a prime $p$. The restriction to primes stems from our dependence on Moser's work (\cite{Moser1979}) describing the Galois module structure of units in $D_p$-extensions. The case $p > 5$ is conceptually the same as $p = 5$, but is more computationally difficult.

There is also a natural variant of the regulator relation for general $p$. Let $H$ be the hyperplane in $\RR^{r+1}$ whose coordinates sum to $0$. Zero index the rows of $\widetilde{G}$, and define the map
\[
    \varphi \colon \QQ^+ \otimes \RR \rightarrow H
\]
sending $1$ to the $0$-th row of $\widetilde{G}$ and $\zeta_p^i + \zeta_p^{-i}$ to the $i$-th row of $\widetilde{G}$ for $1 \leq i < r$. It is easily seen that $\varphi$ is an isomorphism of vector spaces. Via the $\Log$ map, we also have $H \simeq E_K \otimes \RR$.  Let $\mathfrak{a} = \varphi^{-1}(\Log(E_K))$; we will show that as a group $\mathfrak{a} \cong \ZZ^r$ and $A^+ \mathfrak{a} = \mathfrak{a}$, so $\mathfrak{a}$ is a fractional ideal of $A^+$. Let $N(\mathfrak{a})$ be the absolute value of the determinant of a matrix sending a basis of $A^+$ to a basis of $\mathfrak{a}$. The following is a generalization of \Cref{5regulator}.

\begin{proposition}\label{pregulator}
Let $K$ be a $D_p$-extension with a unique real embedding. Let $\mathfrak{a}$ be obtained as above. Then the regulator of $K$ is given by
		\[	R_K =  N(\mathfrak{a}) \disc(\QQ^+).	\]
\end{proposition}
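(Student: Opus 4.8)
The plan is to compute the regulator $R_K$ directly as the covolume of $\Log(E_K)$ inside the hyperplane $H \subset \RR^{r+1}$, and then transport that computation through the isomorphism $\varphi$ to a covolume computation for the fractional ideal $\mathfrak{a}$ inside $\QQ^+ \otimes \RR$. Recall that by definition the regulator is the $r$-dimensional volume of a fundamental domain for the lattice $\Log(E_K) \subset \RR^{r}$ (after deleting one coordinate), which equals the covolume of $\Log(E_K)$ inside $H$ with respect to the Euclidean metric on $H$ induced from $\RR^{r+1}$, up to the universal normalization factor $\sqrt{r+1}$ coming from the fact that the "sum of coordinates equals zero" hyperplane is tilted; I will need to pin down this normalization carefully, as it is the one genuinely fiddly point. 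So first I would fix once and for all that $R_K = \frac{1}{\sqrt{r+1}}\,\mathrm{covol}_{\RR^{r+1}}\big(\Log(E_K) \oplus \ZZ\cdot(1,\dots,1)\big)$, using that $\Log(E_K)$ sits in $H$ and $(1,\dots,1) \perp H$ has length $\sqrt{r+1}$.

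Next I would use the isomorphism $\varphi \colon \QQ^+ \otimes \RR \to H$ which by construction sends the $\ZZ$-basis $B = [1, \zeta_p+\zeta_p^{-1},\dots,\zeta_p^{r-1}+\zeta_p^{1-r}]$ of $A^+$ to the rows of $\widetilde{G}$. Thus $\varphi$ carries the lattice $A^+$ onto the lattice spanned by the rows of $\widetilde{G}$, whose Gram matrix is exactly $G_{\unit} = \widetilde{G}\widetilde{G}^T$, and carries $\mathfrak{a} = \varphi^{-1}(\Log(E_K))$ onto $\Log(E_K)$. Since $\mathfrak{a} \supseteq$ or is commensurable with $A^+$ with index data recorded by $N(\mathfrak{a})$ (the absolute value of the determinant of a change-of-basis matrix from a basis of $A^+$ to a basis of $\mathfrak{a}$), we get
\[
    \mathrm{covol}_H(\Log(E_K)) = N(\mathfrak{a}) \cdot \mathrm{covol}_H(\varphi(A^+)) = N(\mathfrak{a}) \cdot \sqrt{\det G_{\unit}}.
\]
So the whole proposition reduces to the clean linear-algebra identity
\[
    \tfrac{1}{\sqrt{r+1}}\sqrt{\det G_{\unit}} = \disc(\QQ^+),
\]
after one checks the normalization factor matches; note $\disc(\QQ^+) = \det G$ where $G$ is the trace-form Gram matrix of $A^+$ (it is positive since $\QQ^+$ is totally real).

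The remaining step is therefore to prove $\det G_{\unit} = (r+1)\,(\det G)^2 = (r+1)\disc(\QQ^+)^2$. Here I would use the matrix identity: if $P$ is the embedding matrix (rows indexed by powers of $\zeta_p+\zeta_p^{-1}$, columns by the $r$ real embeddings $j_1,\dots,j_r$) then $G = PP^T$, so $\det G = (\det P)^2$. The augmented matrix $\widetilde P$ obtained by prepending the embedding row for the "zeroing" relation—equivalently, observing that $1 + \sum_{i=1}^{r}(\zeta_p^i + \zeta_p^{-i}) = 0$ in $\QQ(\zeta_p)$ gives the identity $\widetilde{G} = P\,\widetilde{P}^{\,T}$ for an appropriate $r\times(r+1)$ matrix $\widetilde P$ whose columns, together with a final $(1,\dots,1)^T$-type column, form (up to sign) the full $(r+1)\times(r+1)$ Vandermonde-type matrix for $\zeta_p,\zeta_p^2,\dots$; then $\det G_{\unit} = \det(\widetilde G \widetilde G^T)$ is computed by Cauchy–Binet as a sum of squares of $r\times r$ minors of $\widetilde G$, and the zeroing-column structure makes all these minors equal to $\pm\det G$, yielding the factor $r+1$. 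The main obstacle I anticipate is bookkeeping: getting the Euclidean normalization of the hyperplane $H$ consistent with the classical definition of the regulator, and correctly identifying the $(r+1)$-fold Cauchy–Binet sum; both are elementary but error-prone, and I would double-check the normalization against the known case $p=5$, $r=2$, where $\disc(\QQ^+) = 5$ and the formula should reproduce the constant appearing in \Cref{5regulator}.
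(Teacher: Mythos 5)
Your argument is correct and arrives at the right identity, but it takes a genuinely different and longer route than the paper, which treats \Cref{pregulator} as essentially immediate: since $\varphi$ sends the basis $B$ of $A^+$ to the rows of $\widetilde{G}$ and sends a $\ZZ$-basis of $\mathfrak{a}$ to a $\ZZ$-basis of $\Log(E_K)$, the logarithmic embedding matrix of the corresponding unit basis is exactly $M_{\mathfrak{a}}\widetilde{G}$; the regulator is by definition the absolute value of any maximal minor of this $r\times(r+1)$ matrix, and deleting the zeroing column leaves $M_{\mathfrak{a}}G$, whose determinant is $\pm N(\mathfrak{a})\det G=\pm N(\mathfrak{a})\disc(\QQ^+)$. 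Your route instead passes through the covolume of $\Log(E_K)$ in the hyperplane $H$ and the identity $\det G_{\unit}=(r+1)(\det G)^2$; your Cauchy--Binet argument for that identity is correct (all maximal minors of $\widetilde{G}$ agree up to sign because its rows sum to zero --- the Vandermonde/$\widetilde{P}$ digression is not needed for this), and it checks out numerically for $p=5$, where $\det G_{\unit}=75=3\cdot 5^2$. What your approach buys is an explicit relation between $G_{\unit}$ and $\disc(\QQ^+)$ that the paper never states; what it costs is having to track the hyperplane normalization, and there you do have a slip, albeit one you flagged and one that washes out: the formula you ``fix once and for all,'' $R_K=\tfrac{1}{\sqrt{r+1}}\,\mathrm{covol}_{\RR^{r+1}}\bigl(\Log(E_K)\oplus\ZZ\cdot(1,\dots,1)\bigr)$, actually equals $\mathrm{covol}_H(\Log(E_K))$ because the vector $(1,\dots,1)$ has length $\sqrt{r+1}$, whereas the correct relation --- and the one you silently use two lines later when you reduce the proposition to $\tfrac{1}{\sqrt{r+1}}\sqrt{\det G_{\unit}}=\disc(\QQ^+)$ --- is $R_K=\tfrac{1}{\sqrt{r+1}}\,\mathrm{covol}_H(\Log(E_K))$, i.e. $\tfrac{1}{r+1}$ times the covolume in $\RR^{r+1}$. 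With that normalization corrected, your proof is complete.
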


The proof of \Cref{pregulator} is immediate from the fact that the determinant of $G$ is $\disc(\QQ^+)$ and $\varphi$ is an isomorphism of vector spaces. \Cref{maintheorem} specializes to \Cref{hypercycle} when $p = 5$. In this case the cyclotomic field $\QQ(\zeta_5)$ has totally real subfield $\QQ(\zeta_5+\zeta_5^{-1}) = \QQ(\sqrt{5})$, which has trivial class number. The ring of integers of the totally real field is $A^+ = \ZZ[\zeta_5 + \zeta_5^{-1}]$, and an integral basis of $A^+$ is $B = \{1, \zeta_5+\zeta_5^{-1}\}$. The Gram matrix of the trace form of $A^+/\ZZ$ is:
\[
    G = \begin{pmatrix} 2 & -1 \\ -1 & 3\end{pmatrix}.
\]
Thus, computing the Gram matrix we obtain:
\[
    G_{\unit} = \widetilde{G}\widetilde{G}^T = \begin{pmatrix} 6 & -3 \\ -3 & 14\end{pmatrix}.
\]
Note that $G_{\unit}$ corresponds to the point $\left(-\frac{1}{2}, \frac{5}{2\sqrt{3}}\right)$ in $\mf{H}$. Letting $\eta_1 \coloneqq \zeta_5 + \zeta_5^{-1}$ and $\eta_2 \coloneqq \zeta_5^2 + \zeta_5^{-2}$, we have the embedding matrix $P$:
    \[  P = \begin{pmatrix} 1 & 1 \\ \eta_1 & \eta_2 \end{pmatrix}.\]
So we obtain:
    \begin{align*}
      \mathcal{T}_5     &   = P \mathcal{T} P^{-1}                                \\
                        &   = \begin{pmatrix} 1 & 1 \\ \eta_1 & \eta_2 \end{pmatrix}\begin{pmatrix} a & 0 \\ 0 & b \end{pmatrix} \frac{1}{\eta_2-\eta_1}\begin{pmatrix} \eta_2 & -1 \\ -\eta_1 & 1 \end{pmatrix} \\
                        &   =  \frac{1}{\eta_2 - \eta_1} \begin{pmatrix} a\eta_2 - b\eta_1 & b-a \\ b-a & b\eta_2 - a\eta_1 \end{pmatrix}.
\end{align*}   
A computation shows that this group flows $G_{\unit}$ along the specified arc as $a$ and $b$ vary.

\subsection{Overview of previous work}
There is little known about the shapes of unit lattices, but here we include a brief overview of the recent work in the area.

\begin{itemize} 
    \item Ofir David and Uri Shapira, in \cite{DS unit shape}, examine the unit shape of totally real cubic fields and conjecture that this set is dense in $\mathcal{S}_2$: they provide evidence using a certain explicit family of cubic fields.

    \item Jose Miguel Cruz Rangel, in \cite{cyclic unit shapes}, investigates the unit shape of cyclic number fields of degree $\leq 7$. In particular, he studies the well-roundedness of these lattices and determines a parametrizing space of cyclic unit lattices. 

    \item Fernando Azpeitia-Tellez, Christopher Powell, and Shahed Sharif, in \cite{biquad unit shapes}, study the geometry of unit lattices of biquadratic fields, $K= \QQ(\sqrt{D_1}, \sqrt{D_2})$. They exploit the Galois module structure to determine when the unit lattice is generated by pairwise orthogonal vectors. This result can be interpreted to show that for a particular family of biquadratic fields, the unit shapes lie on restricted subspaces (more precisely, that they lie on a torus orbit).  

    \item Nguyen-Thi Dang, Nihar Gargava, and Jialun Li, in \cite{dang-gargava-li} show that the unit shape of orders in totally real cubic fields are \emph{dense} in $\mathcal{S}_2$.
        
    \item Independently, Emilio Corso and Federico Rodriguez Hertz, in \cite{corso-hertz}, show that the unit shape of orders in totally real cubic fields are \emph{unbounded} in $\mathcal{S}_2$.

    \item Sergio Ricardo Zapata Ceballos, Sara Chari, Fatemeh Jalalvand, Rahinatou Yuh Njah Nchiwo, Kelly O'Connor, Fabian Ramirez and the two named authors of this paper, in \cite{rnt} study the geometry of unit lattices of $D_4$-quartic fields with signature $(2,1)$. Namely, they show that the shapes in $\mathcal{S}_2$ arising from unit lattices of such fields are transcendental and the set of limit points is nonempty. 
\end{itemize}

\subsection{Cryptographic context}\label{contextsection}

We note that although the shape is the central invariant in this paper, it is also natural to consider other geometric properties of these lattices. For example one may consider well-roundedness, orthogonality, or kissing number. Understanding the geometry of these lattices is an important question in number theory and has applications to cryptography. In the cryptographic setting, lattice-based schemes -- which have garnered significant attention in the post-quantum cryptography space -- rely on the hardness of certain geometric problems, such as the Shortest Vector Problem (SVP) and the Closest Vector Problem (CVP), both of which are profoundly influenced by the underlying lattice structure. Notably, in cyclotomic fields, the structure of the unit lattice was exploited to defeat the security of the SOLILOQUY cryptosystem, \cite{sililoquy}. This case underscores how subtle geometric and algebraic features of lattices can directly impact the security of cryptographic protocols and suggests that a deeper understanding of the geometry of lattices is critical not only for theoretical advancements but also for practical applications in secure communications.

Existing results on unit shapes, and most on shapes of integral lattices, have focused on low-degree number fields, making them impractical for cryptographic applications. This paper, however, studies unit shapes in arbitrarily large rank. So, while our results are primarily theoretical, they highlight restrictions in the shapes that may be of relevance to cryptographic research, where a nuanced understanding of the geometry could help identify both opportunities and vulnerabilities.

\subsection{Acknowledgments} 
The authors would like to thank John Voight, whose suggestion to explore shapes of unit lattices inspired the computations which led to this project. We'd also like to thank Ofir David,  Santiago Arango-Piñeros, and Uri Shapira for comments on an earlier version of this draft. We are also grateful to the referees for helpful comments. The third author was funded by the NSF under grant number DMS2303211.

\section{Preliminaries}
\label{sec:pre}
In this section, we discuss results from \cite{Moser1979} which describe the Galois module structure of the unit lattice. Let $p \geq 5$ be a prime. Let $K$ be a number field of degree $p$ with a unique real embedding $\iota$ whose Galois closure $L$ has dihedral Galois group $D_p$. By Dirichlet's unit theorem, the rank of $E_K$ is $(p-1)/2$; let $r = (p-1)/2$. Let $L$ be the Galois closure of $K$. Let $\tau$ denote the unique nontrivial element of $D_p$ such that $K=L^\tau$. Let $\sigma$ denote a $p$-cycle in $D_p$ so $D_p=\langle\sigma,\tau\rangle$ and $\tau\sigma\tau=\sigma^{-1}$. 

Let $\zeta_p$ be a primitive $p$--th root of unity and let $\QQ^+\coloneqq\QQ(\zeta_p+\zeta_p^{-1})$. It is the maximal totally real subfield of $\QQ(\zeta_p)$. Let $A = \ZZ[\zeta_p]$ and $A^+ = \ZZ[\zeta_p+\zeta_p^{-1}]$ denote the ring of integers of $\QQ(\zeta_p)$ and $\QQ^+$, respectively. Let $\mf{P}$ be the unique prime of $\QQ(\zeta_p)$ above $p$ and let $\mf{p}=\mf{P}\cap A^+$.

Following \cite{Moser1979}, we endow $A$ with the structure of a $\ZZ[D_p]$-module; let $\sigma$ acting by multiplication by $\zeta_p$ and let $\tau$ act by complex conjugation. Now, Proposition~III.1 of \cite{Moser1979} states that
\[
	E_L\cong\mf{a}\mf{P}^e\text{ for some $e\in\{0,1\}$ and some fractional ideal $\mf{a} \subseteq \QQ^+$}
\]
as a $\ZZ[D_p]$-module. So, if $e = 0$ then $E_L \simeq \mathfrak{a}A$ and if $e = 1$ then $E_L \simeq \mathfrak{a}\mathfrak{P}$. Now, Proposition~I.2 of \cite{Moser1979} says that $E_K=E_L^\tau$; note that this proposition is nontrivial as $E_L$ (resp. $E_K$) is the quotient of the unit group by roots of unity. Therefore, under the isomorphism above, we get
\[
    E_K=E_L^\tau \cong (\mf{a}\mf{P}^e)^{\tau} = \mf{a}\mf{p}^e 
\]
and the latter is a fractional ideal of $\QQ^+$. Now, we endow this identification with more structure. Observe that both $E_K$ and $\mf{a}\mf{p}^e$ are $\ZZ[\sigma + \sigma^{-1}]$-modules and we obtain an isomorphism $E_K \simeq \mf{a}\mf{p}^e$ of $\ZZ[\sigma + \sigma^{-1}]$-modules. It is this description of $E_K$ as a $\ZZ[\sigma + \sigma^{-1}]$-module that is crucial to our proof below.


\section{Proof of \Cref{hypercycle} and \Cref{5regulator}}
This section is devoted to the proof of \Cref{hypercycle} and \Cref{5regulator}. Let $K$ be a $D_5$-extension with unique real embedding $\iota$. Let $\sigma$ be a $5$-cycle in $D_5$. Let $E_K$ denote the unit group of the ring of integers modulo torsion. Let $S_{\ZZ} = \ZZ[\sigma + \sigma^{-1}]$. 

\subsection{The structure of $E_K$}
By the exposition in \Cref{sec:pre}, since $\QQ(\zeta_5)$ has trivial class number, it immediately follows that $E_K \simeq A^+$ as an $S_{\ZZ}$-module. Note that a basis for $A^+$ is $\{1, \zeta_5 + \zeta_5^{-1}\}$, and $(\sigma + \sigma^{-1})(1) = \zeta_5 + \zeta_5^{-1}$. So, for any isomorphism $\varphi \colon A^+ \rightarrow E_K$ of $S_{\ZZ}$-modules, $\{\varphi(1), (\sigma + \sigma^{-1})(\varphi(1))\}$ is a basis for $E_K$.

\subsection{The logarithmic embedding}
Now, we fix an extension of the real embedding $\iota$ to $L$. The set of embeddings of $L$ into $\CC$ is $\iota\circ g$ for $g\in D_5$. Since $\tau$ acts trivially on $K$, the set of embeddings of $K$ are given by $\iota \circ \sigma^i$, for $0\leq i \leq 4$. We denote each embedding by $\iota_i \coloneqq \iota \circ \sigma^i$. Note that the pairs $(\iota_1, \iota_4)$ and $(\iota_2, \iota_3)$ are pairs of complex conjugate embeddings because $\tau$ is complex conjugation. Suppose the logarithmic embedding $\Log \colon E_K \rightarrow \RR^3$ is given by sending 
\[
    u \rightarrow (\log |\iota_0(u)|, 2\log |\iota_1(u)|, 2 \log |\iota_2(u)|).
\]
The map $\Log \colon E_K \rightarrow \RR^3$ is also a map of $S_{\ZZ}$-modules; $S_{\ZZ}$ acts on $\RR^3$ via $(\sigma + \sigma^{-1})(a_0, a_1, a_2) = (a_1, 2a_0 + a_2, a_1 + a_2)$.

\subsection{Proof of \Cref{hypercycle}}
Choose an isomorphism $\varphi \colon A^+ \simeq E_K$ as $S_{\ZZ}$-modules. Let $u_0 = \varphi(1)$ and $u_1 = (\sigma + \sigma^{-1})(u_0)$. Then $\{u_0,u_1\}$ is a basis of $E_K$. 
Write 
\[
    \Log(u_0) = (a_0, a_1, -a_0-a_1)
\]
for real numbers $a_0,a_1$. Then:
\[
    \Log(u_1) = \Log(\sigma(u_0)) + \Log(\sigma^4(u_0)) = (a_1, a_0 - a_1, -a_0).
\]
\noindent The Gram matrix of the vectors $\{-\Log(u_1),\Log(u_0)\}$ is:
 	\[	G =
			\begin{pmatrix}
				2(a_0^2 - a_0a_1 + a_1^2)	&	a_1^2 - 3a_0a_1 - a_0^2						\\
				a_1^2 - 3a_0a_1 - a_0^2		&	2(a_0^2 + a_0a_1 + a_1^2 )
			\end{pmatrix}
 	\]
 Given a rank 2 lattice with Gram matrix $G = (G_{ij})$ we find the associated point in the upper half plane by letting
 	\begin{align*}
		x 	= \frac{G[0,1]}{G[0,0]},	&\;\;\;		y	= \sqrt{\frac{G[1,1]}{G[0,0]} - x^2}
	\end{align*}
Now, a straightforward computation shows $(x,y)$ lies on the hypercycle $\psi$ and proves \Cref{hypercycle}.
 
 \subsection{Proof of \Cref{5regulator}}
 Now, we have the following result which allows us to compute the regulator of $K$ using the norm form of $A^+$ and the \emph{logarithmic units} of $K$. Let $N(x_0, x_1)\in \ZZ[x_0, x_1]$ be the binary quadratic form given by the norm form of $A^+$ in the basis $\{1, \eta_1= \zeta + \zeta^{-1}\}$. Let $u_0$ be a unit of $K$ such that $\{u_0, u_1 = (\sigma + \sigma^{-1})u_0\}$ is a basis for $E_K$. Then we will show that $\lvert N(\log \lvert \iota(u_0) \rvert, \log \lvert \iota(u_1) \rvert) \rvert$ is equal to the regulator of $K$.
 
First, an easy computation shows that $N(x_0, x_1) = x_0^2  - x_0x_1 - x_1^2$. Letting $u_0, u_1$ be as above, we obtain:
\[
    \begin{bmatrix}
           \Log(u_0) \\           
           \Log(u_1)
    \end{bmatrix} = 
    \begin{pmatrix}
		a_0	&	a_1	& -a_0 - a_1	\\
		a_1 	&	a_0 - a_1	& -a_0
	\end{pmatrix}
\]
The regulator of $K$ is the determinant of any $2\times 2$ submatrix, which is easily seen to be $a_0^2  - a_0a_1 - a_1^2$.

 
\section{Proof of \Cref{maintheorem}} 
This section is devoted to the proof of \Cref{maintheorem}. Let $K$ be a degree $p$ number field with unique real embedding, Galois group $D_p$, and Galois closure $L$. As before, $E_K$ denotes the unit group of the ring of integers modulo torsion. Let $S_\RR = \RR[\sigma + \sigma^{-1}]$ and let $S_{\ZZ} = \ZZ[\sigma + \sigma^{-1}]$. Let $r = (p-1)/2$ and let $\eta_i = \zeta_p^i + \zeta_p^{-i}$ for $1 \leq i \leq r$. 

\subsection{Sketch of the proof of \Cref{maintheorem}}
We split the proof into steps:
\begin{enumerate}
    \item First, we'll show that $E_K \simeq \mathfrak{b}$ as an $S_{\ZZ}$-module for some fractional ideal $\mathfrak{b}$ of $\QQ^+$. 
    \item Next, let $H$ be the hyperplane in $\RR^{r+1}$ whose coordinates sum to zero. \begin{enumerate}
        \item We'll describe the log map $\Log \colon E_K \otimes \RR \rightarrow H$.
        \item We'll next set up some isomorphisms 
\[
    E_K \otimes \RR \cong H \cong \QQ^+ \otimes \RR \cong \RR^r,
\]
of $S_{\RR}$-modules. The isomorphism $E_K \otimes \RR \rightarrow H$ will just be the $\Log$ map, and $\QQ^+ \otimes \RR \rightarrow \RR^r$ will be the Minkowski embedding. We'll let the isomorphism $H \cong \QQ^+ \otimes \RR$ be arbitary for now.
    \end{enumerate}

    \item Next, because $E_K \simeq \mathfrak{b}$ as an $S_{\ZZ}$-module, the image of $E_K$ inside $\RR^r$ is isomorphic to $\mathfrak{b}$ as an $S_{\ZZ}$-module. We'll then classify the $S_{\ZZ}$-modules isomorphic to $\mathfrak{b}$ inside of $\RR^r$.
    \item We'll use this description of $S_{\ZZ}$-modules to describe the possible shapes of $E_K$ in terms of $\varphi$.
    \item Finally, we'll make an explicit choice of $\varphi$.
    \item We'll substitute this choice of $\varphi$ into Step $(4)$ to explicitly describe the possible shapes of $E_K$.
\end{enumerate}

\subsection{Step $(1)$: the structure of $E_K$}
By the exposition in \Cref{sec:pre}, we have that $E_L \simeq \mathfrak{a}\mathfrak{P}^{e}$ as a $\ZZ[D_p]$-module for some $e \in \{0,1\}$. Under this isomorphism:
\[
    E_K = E_L^{\tau} \simeq (\mathfrak{a}\mathfrak{P}^{e})^{\tau} = \mathfrak{a}\mathfrak{p}^{e}.
\]
Let $\mathfrak{b} = \mathfrak{a}\mathfrak{p}^{e}$, and as before, this is an isomorphism of $S_{\ZZ}$-modules. Tensoring with $\RR$, we see that
\[
    E_K \otimes \RR \simeq \QQ^+ \otimes \RR
\]
as an $S_\RR$-module. 

\subsection{Step $(2)$, part $(a)$: the logarithmic embedding}
We fix an extension of the real embedding $\iota$ to $L$, which we still denote $\iota$. Then, the set of embeddings of $L$ into $\CC$ are $\iota\circ g$ for $g\in D_p$. Since $\tau$ preserves $K$, the set of embeddings of $K$ into $\CC$ is $\iota\circ\sigma^i$ for $0\leq i\leq p-1$. Let $\iota_i\coloneqq\iota\circ\sigma^i$. Because $\tau$ is complex conjugation of the roots in the complex plane, we have that $\iota_i$ and $\iota_{p-i}$ are a pair of complex conjugate embeddings $K\rightarrow\CC$ for $1\leq i\leq p-1$.

Now suppose the logarithmic embedding $\Log \colon E_K \rightarrow \RR^{r+1}$ is given by sending 
\[
    u \rightarrow (\log |\iota_0(u)|, 2\log |\iota_1(u)|, \ldots, 2 \log |\iota_{r}(u)|).
\]
Let $H$ be the hyperplane inside $\RR^{r+1}$ whose coordinates sum to zero. Because $(\sigma + \sigma^{-1})(K) = K$ and $E_K \otimes \RR \simeq H$, the hyperplane $H$ naturally has the structure of a $S_{\RR}$-module. Moreover, the logarithmic embedding shows that $E_K \otimes \RR \simeq H$ as an $S_{\RR}$-module.

\subsection{Step $(2)$, part $(b)$: the isomorphisms}
Pick an isomorphism $\varphi \colon \QQ^+ \otimes \RR \rightarrow H$ of $S_{\RR}$-modules. We have the series of $S_{\RR}$-module isomorphisms:
\[
    E_K \otimes \RR \rightarrow H \cong_{\varphi} \QQ^+ \otimes \RR \rightarrow \RR^{r}
\]
where $\QQ^+ \otimes \RR \rightarrow \RR^r$ is the Minkowski embedding sending $1 \mapsto (1,\ldots,1)$ and $\eta_1 \mapsto (\eta_1,\eta_2,\dots,\eta_{r})$. 

\subsection{Step $(3)$: $S_{\ZZ}$-modules isomorphic to $\mathfrak{b}$ inside $\RR^r$}
Recall that $E_K \simeq \mathfrak{b}$ as an $S_{\ZZ}$-module. Choose a $\ZZ$-basis $B_{\mathfrak{b}} = \{b_1,\dots,b_r\}$ of $\mathfrak{b}$. Then the subsets of $\QQ^+ \otimes \RR$ that are isomorphic to $\mathfrak{b}$ as an $S_{\ZZ}$-module are precisely fractional ideals in the same ideal class, i.e. they have a $\ZZ$-basis of the form $\{ xb_1,\dots, xb_r\}$ for $x \in (\QQ^+ \otimes \RR)^{\times}$. In $\RR^{r}$, these subsets have a $\ZZ$-basis of the form $\{(\alpha_1,\dots,\alpha_r)\cdot b_1, \dots, (\alpha_1, \dots,\alpha_r)\cdot b_r\}$ for $\alpha_1,\dots,\alpha_r \in \RR^{\times}$, where here the multiplication is coordinate-wise. 

\subsection{Step $(4)$: describing possible shapes of $E_K$ in terms of $\varphi$}

Recall that $\mathcal{G}_r$ is the space of real symmetric positive definite matrices up to scaling, and $\mathcal{S}_r = \gl_r(\ZZ)\backslash \mathcal{G}_r$. 

Let $G_{\mathfrak{b}}$ be the Gram matrix of the lattice spanned by $\{\varphi(b_1),\dots,\varphi(b_r)\}$. We are now ready to write down the group action.
Let $j_1,\dots,j_r$ be the $r$ embeddings $\QQ^+ \rightarrow \RR$ giving rise to the Minkowski embedding above, ordered so that $j_i(\eta_1) = \eta_i$. Define $P_{\mathfrak{b}}$ to be the matrix whose rows are given by the Minkowksi embedding above:
 \[	P_{\mathfrak{b}} = 
		\begin{pmatrix}
				j_1(b_1)		& \dots &	j_r(b_1)	\\
                \dots	& \dots &	\dots	\\
				j_1(b_r)	&\dots &	j_r(b_r)
			\end{pmatrix}.
 	\]
Let $\mathcal{T}$ be the diagonal torus of $\gl_r(\RR)$. By the discussion above, there exists a basis of $E_K$ such that the Gram matrix with respect to this basis is contained in the orbit of $P_{\mathfrak{b}}\mathcal{T}P_{\mathfrak{b}}^{-1}$ acting on $G_{\mathfrak{b}}$; we consider this orbit to be inside $\mathcal{G}_r$. Note that after the quotient to $\mathcal{S}_r$, this orbit depends only on the class $[\mathfrak{b}] \in \Cl(\QQ^+)$. Because the orbit in the quotient arises from an ideal class in a real quadratic field, it is a translate of a periodic torus orbit. 

\subsection{Step $(5)$: choosing a nice $\varphi$}
\label{subsec:phi-choice}
We now choose a nice choice of $\varphi$. Recall that $A^+$ has $\ZZ$-basis $B = \{1,\eta_1,\dots,\eta_{r-1}\}$. Let $G$ be the Gram matrix of the trace form of $A^+/\ZZ$ with respect to $B$. Let $\widetilde{G}$ be the $r\times (r+1)$ matrix obtained by adjoining a \emph{zeroing column} to $G$: the column with $j$--th entry given by $-\sum_{i=1}^{r} G(j,i)$. So:

\[
	\widetilde{G} \coloneqq	\begin{pmatrix}
				\Tr(1)	& \Tr(\eta_1)	& \dots & \Tr(\eta_{r-1}) &	-\sum_{i = 0}^{r-1}\Tr(\eta_i)	\\
                \Tr(\eta_{1}) & \Tr(\eta_1^2)	&\dots & \Tr(\eta_1 \eta_{r-1} )& -\sum_{i = 0}^{r-1}\Tr(\eta_{1}\eta_i) \\
                \dots	& \dots &	\dots	\\
                \Tr(\eta_{r-2}) & \Tr(\eta_1 \eta_{r-2})	&\dots & \Tr(\eta_{r-2}\eta_{r-1} )& -\sum_{i = 0}^{r-1}\Tr(\eta_{r-2}\eta_i) \\
                
				\Tr(\eta_{r-1}) & \Tr(\eta_1 \eta_{r-1})	&\dots & \Tr(\eta_{r-1}^2 )& -\sum_{i = 0}^{r-1}\Tr(\eta_{r-1}\eta_i)
			\end{pmatrix},
\]
where here $\Tr$ is the trace map from $\QQ^+$ to $\QQ$. For the convenience of the reader, we include a computation of the trace matrix $G$ below:
\[
		G = \vect{\frac{p-1}{2} & -1 & -1 & \cdots & \cdots &-1\\
			-1 &p-2&-2&\cdots& \cdots &-2\\
			\vdots&-2&p-2&-2& \cdots &-2\\
			\vdots&\vdots&&\ddots&&\vdots\\
			-1&-2&&&&p-2},
\]
We omit the proof as it is not directly needed for our results.

Now, define the map
\[
    \varphi \colon \QQ^+ \otimes \RR \rightarrow H
\]
sending the basis $B = \{1,\eta_1,\dots,\eta_{r-1}\}$ to the rows of $\widetilde{G}$. For notational simplicity, we $0$-index the rows and columns of $\widetilde{G}$ in \Cref{p:equivariant}. 

\begin{proposition}
\label{p:equivariant}
The map $\varphi$ is an isomorphism of $S_{\RR}$-modules.
\end{proposition}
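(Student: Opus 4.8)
The plan is to first rewrite $\varphi$ in a coordinate-free form, and then to check separately that it is a linear isomorphism onto $H$ and that it commutes with the single operator $\sigma+\sigma^{-1}$ (which generates $S_\RR$ over $\RR$). The preliminary observation is that the zeroing column records exactly the ``missing'' trace data. Writing $\Tr=\Tr_{\QQ^+/\QQ}$, extended $\RR$-linearly to $\QQ^+\otimes\RR$, and setting $\eta_0\coloneqq 1$ and $\eta_r\coloneqq\zeta_p^r+\zeta_p^{-r}$, the identity $1+\eta_1+\cdots+\eta_r=\sum_{i=0}^{p-1}\zeta_p^i=0$ gives $\eta_r=-(1+\eta_1+\cdots+\eta_{r-1})$, so the $j$-th entry of the zeroing column, namely $-\sum_{i=0}^{r-1}\Tr(\eta_j\eta_i)$, equals $\Tr(\eta_j\eta_r)$. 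Hence $\widetilde G$ is simply the $r\times(r+1)$ matrix with $(j,k)$-entry $\Tr(\eta_j\eta_k)$ ($0\le j\le r-1$, $0\le k\le r$), and $\varphi$ is the $\RR$-linear map $x\mapsto(\Tr(x\eta_0),\Tr(x\eta_1),\dots,\Tr(x\eta_r))$. With $\varphi$ in this form, linearity is automatic.

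For the isomorphism part: $\varphi(x)$ lies in $H$ since its coordinate sum is $\Tr\!\bigl(x\sum_{k=0}^r\eta_k\bigr)=0$; $\varphi$ is injective because $\varphi(x)=0$ forces $\Tr(xb)=0$ for every $b$ in the basis $B$, hence $x=0$ by nondegeneracy of the trace form of $\QQ^+$ (its Gram matrix is $G$, and $\det G=\disc(\QQ^+)\ne 0$); and since $\dim_\RR(\QQ^+\otimes\RR)=r=\dim_\RR H$, this makes $\varphi$ a linear isomorphism onto $H$. For the equivariance part I would first record how $\sigma+\sigma^{-1}$ acts on $\RR^{r+1}$: tracing through $\iota_i(\sigma u)=\iota_{i+1}(u)$ together with the normalization of $\Log$ (weight $1$ at the real place, weight $2$ at the complex places), exactly as in the $p=5$ case treated above, gives the operator $T$ with $(Tv)_0=v_1$, $(Tv)_1=2v_0+v_2$, $(Tv)_k=v_{k-1}+v_{k+1}$ for $2\le k\le r-1$, and $(Tv)_r=v_{r-1}+v_r$. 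As $\sigma+\sigma^{-1}$ acts on $\QQ^+\otimes\RR$ by multiplication by $\eta_1$, equivariance is the identity $\varphi(\eta_1 x)=T\varphi(x)$, which by $\RR$-linearity I need only verify on $B$. Coordinatewise this reads $\Tr\!\bigl(\eta_j\cdot\eta_1\eta_k\bigr)=(T\widetilde G_j)_k$ for $0\le j\le r-1$, $0\le k\le r$, and it follows by substituting the cyclotomic product rule: $\eta_1\eta_0=\eta_1$ matches $(Tv)_0=v_1$; $\eta_1^2=\eta_2+2$ matches $(Tv)_1=2v_0+v_2$ (using $\Tr(\eta_j)=\Tr(\eta_j\eta_0)$); $\eta_1\eta_k=\eta_{k+1}+\eta_{k-1}$ for $2\le k\le r-1$ matches the interior of $T$; and $\eta_1\eta_r=\eta_r+\eta_{r-1}$ (because $\zeta_p^{\,r+1}=\zeta_p^{-r}$, so there is no genuine $\eta_{r+1}$ term) matches $(Tv)_r=v_{r-1}+v_r$. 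Together with the isomorphism statement, $\varphi$ is an isomorphism of $S_\RR$-modules.

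I expect no deep obstacle: the proposition is essentially one bookkeeping identity, checked place by place. The only part that needs real care is the behavior at the boundary coordinates $k\in\{0,1,r\}$, where the asymmetry of $\Log$ between the real and complex places enters (this is why $(Tv)_1=2v_0+v_2$ carries the factor $2$) and where the wrap-around $\zeta_p^{\,r+1}=\zeta_p^{-r}$ collapses the expected $\eta_{r+1}$ into $\eta_r$. Everything in the interior is just the three-term recurrence $\eta_1\eta_k=\eta_{k+1}+\eta_{k-1}$ reflected on the two sides of $\varphi$, and when $p=5$ the interior range is empty, so the argument reduces to the computation already carried out for that case.
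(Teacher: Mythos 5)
Your proof is correct and is essentially the paper's argument: both come down to the cyclotomic trace identities $\eta_a\eta_b=\eta_{a+b}+\eta_{a-b}$ (with the wrap-around $\eta_{r+1}=\eta_r$ and the degenerate case $\eta_a^2=\eta_{2a}+2$), the only organizational difference being that you verify equivariance for the single generator $\sigma+\sigma^{-1}$ acting row by row via the explicit operator $T$, while the paper checks $(\sigma^i+\sigma^{-i})\varphi(1)=\varphi(\eta_i)$ for all $i$. Your coordinate-free reformulation $\varphi(x)=\bigl(\Tr(x\eta_k)\bigr)_{k=0}^{r}$ and the explicit bijectivity argument via nondegeneracy of the trace form are welcome additions, since the paper's proof only verifies equivariance and leaves the linear-isomorphism part to the earlier remark that it is ``easily seen.''
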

\begin{proof}
It suffices to check that 
\[
    (\sigma^i + \sigma^{-i})(\varphi(1)) = \varphi(\eta_i)
\]
for every $1 \leq i < r$.  This is equivalent to saying that $(\sigma^i + \sigma^{-i})$ applied to the $0$--th row of $\widetilde{G}$ is the $i$--th row. It suffices to check the entries $\widetilde{G_{ij}}$ for $1 \leq i < r$ and $0 \leq j < r$, because the rows of $\widetilde{G}$ sum to zero. Again, we do the proof in cases. For $v \in \RR^{r+1}$ and $0 \leq j \leq r$, let $v[j]$ denote the entry in the $j$--th component. Let $\widetilde{G_0},\dots,\widetilde{G_r}$ be the rows of $\widetilde{G}$. 

In this notation, we are interested in showing that
\[
    \bigg((\sigma^i + \sigma^{-i})\widetilde{G_0}\bigg)[j] = \widetilde{G_i}[j]
\]
for all $1 \leq i < r$ and $0 \leq j < r$. By convention set $\eta_0 \coloneqq 1$ and $\eta_k = \zeta_p^k + \zeta_p^{-k}$ for any integer $k$ such that $p \neq k$. First notice that $\widetilde{G}_i[r+1] = \Tr(\eta_i\eta_{r+1})$. Now we split the proof into three cases.

\noindent \textbf{Case 1: $j = 0$ and $1 \leq i < r$.} We have:
\begin{align*}
\widetilde{G_i}[j] &= \Tr(\eta_i) \\
&= \frac{1}{2}\Tr(\eta_i) + \frac{1}{2}\Tr(\eta_{-i}) \\
&= \bigg((\sigma^i + \sigma^{-i})\widetilde{G_0}\bigg)[j]
\end{align*}

\noindent \textbf{Case 2: $0 < j < r$ and $1 \leq i < r$ and $i \neq j$.} We have:
\begin{align*}
\widetilde{G_i}[j] &= \Tr(\eta_i \eta_j) \\
&= \Tr(\zeta^{i+j}+\zeta^{i-j}+\zeta^{-(i-j)}+\zeta^{-(i+j)}) \\
&= \Tr(\eta_{i + j}) + \Tr(\eta_{i - j}) \\
&= \bigg((\sigma^i + \sigma^{-i})\widetilde{G_0}\bigg)[j]
\end{align*}

\noindent \textbf{Case 3: $0 < i = j < r$.} We have:
\begin{align*}
\widetilde{G_i}[j] &= \Tr(\eta_i^2) \\
&= \Tr(\eta_{2i}+2) \\
&= \Tr(\eta_{2i}) + 2\Tr(1) \\
&= \bigg((\sigma^i + \sigma^{-i})\widetilde{G_0}\bigg)[j]
\end{align*}
\noindent
\end{proof}

\subsection{Step $(6)$: describing possible shapes of $E_K$ using the given $\varphi$}
Let $\varphi$ be given by $\widetilde{G}$. Let $M_{\mathfrak{a}}$ be the matrix sending $B = \{1,\eta_1,\dots,\eta_{r-1}\}$ to $B_{\mathfrak{a}} = \{b_1,\dots,b_r\}$, so $M_{\mathfrak{a}}B^T = B_{\mathfrak{a}}^T$. Recall that
\[
    P = 
\begin{bmatrix}
    1       & \dots & 1 \\
    j_1(\eta_1)       & \dots & j_r(\eta_1) \\
    \vdots &  \ddots & \vdots \\
    j_1(\eta_{r-1})       & \dots & j_r(\eta_{r-1})
\end{bmatrix}.
\]
where here $j_1,\dots,j_r$ are the $r$ embeddings of $\QQ^+$ into the real numbers corresponding to the Minkowski embedding above. Thus:
\[
    M_{\mathfrak{b}}P = P_{\mathfrak{b}}
\]
So:
\[
    P_{\mathfrak{b}}\mathcal{T}P_{\mathfrak{b}}^{-1} = M_{\mathfrak{b}}P\mathcal{T}P^{-1}M_{\mathfrak{b}}^{-1}.
\]
So the orbit of $P_{\mathfrak{b}}\mathcal{T}P_{\mathfrak{b}}^{-1}$ on $G_{\mathfrak{b}}$ is equal to the orbit of $M_{\mathfrak{b}}P\mathcal{T}P^{-1}$ on $M_{\mathfrak{b}}^{-1}G_{\mathfrak{b}}M_{\mathfrak{b}}$. Finally, observe that $M_{\mathfrak{b}}P\mathcal{T}P^{-1} = M_{\mathfrak{b}}\mathcal{T}_p$ and $M_{\mathfrak{b}}^{-1}G_{\mathfrak{b}}M_{\mathfrak{b}} = G_{\unit}$. So, there exists a basis of $E_K$ such that the Gram matrix with respect to this basis is contained in the orbit of $M_{\mathfrak{b}}\mathcal{T}_p$ on $G_{\unit}$, completing the proof of \Cref{maintheorem}.


\begin{thebibliography}{{The}19}



\bibitem[ATPS21]{biquad unit shapes} 
Fernando Azpeitia-Tellez, Christopher Powell, and Shahed Sharif.
\emph{Geometry of biquadratic and cyclic cubic log-unit lattices},
Journal of Number Theory,
Volume {\bf 228},
2021,
276-293

%
%
%
%







\bibitem[CCHJNORV25]{rnt} Sergio Ricardo Zapata Ceballos, Sara Chari, Erik Holmes, Fatemeh Jalalvand, Rahinatou Yuh Njah Nchiwo, Kelly O'Connor, Fabian Ramirez and Sameera Vemulapalli, {\em Unit lattices of $D_4$-quartic number fields with signature $(2,1)$},  preprint 2025, arXiv:2507.06130.


\bibitem[CGS14]{sililoquy}
  Peter Campbell, Michael Groves and Dan Shepherd. \emph{Soliloquy: A cautionary tale}, In \emph{ETSI 2nd Quantum-Safe Crypto Workshop}, 2014.

\bibitem[CH25]{corso-hertz} Emilio Corso and Federico Rodriguez Hertz, {\em Unboundedness of shapes of unit lattices in totally real cubic fields},  preprint 2025, arXiv:2502.16338.














\bibitem[DGL25]{dang-gargava-li} Nguyen-Thi Dang, Nihar Gargava and Jialun Li, {\em Density of shapes of periodic tori in the cubic case}, preprint 2025, arXiv:2502.12754.

\bibitem[DS18]{DS unit shape}
Ofir David, Uri Shapira. \emph{Dirichlet Shapes of Unit Lattices and Escape of Mass}, International Mathematics Research Notices, Volume 2018, Issue 9, May 2018, Pages 2810–2843.



































\bibitem[LMFDB]{LMFDB}
The LMFDB Collaboration, The L-functions and modular forms database, \href{https://www.lmfdb.org}{LMFDB}, 2025, [Online; accessed 19 January 2025].

\bibitem[Mil17]{milne} James Milne, {\em Modular Functions and Modular Forms}, lecture notes 2017, \url{https://www.jmilne.org/math/CourseNotes/MF.pdf}.

\bibitem[Mos79]{Moser1979}
Nicole Moser. \emph{Unit\'{e}s et nombre de classes d'une extension galoisienne di\'{e}drale de $\QQ$.}, Abh. Math. Sem. Univ. Hamburg {\bf 48} (1979), 54-75.







\bibitem[Neu99]{neukirch}
J\"urgen Neukirch. \emph{Algebraic number theory}, Grundlehren der Mathematischen Wissenschaften, vol. 322, Springer-Verlag, Berlin, 1999, Translated from the 1992 German original by Norbert Schappacher. MR 1697859 (2000m:11104)















\bibitem[Ran20]{cyclic unit shapes}
Jose Miguel Cruz Rangel, \emph{Well-rounded unit lattices from real cyclic number fields of small degree}, MSc Thesis, Universidad de los Andes, Colombia, 2020.


\end{thebibliography}
%
%

\end{document}